\documentclass{amsart}
\usepackage{amsmath, amsthm,hyperref}
\usepackage{color}
\allowdisplaybreaks

\theoremstyle{theorem}
\newtheorem{theorem}{Theorem}
\newtheorem{corollary}[theorem]{Corollary}
\newtheorem{prop}[theorem]{Proposition}
\newtheorem{lemma}[theorem]{Lemma}
\newtheorem{definition}{Definition}

\DeclareMathOperator{\mex}{mex}

\DeclareMathOperator{\crank}{crank}

\title{Dyson's Crank and the Mex of Integer Partitions}
\author{Brian Hopkins}
\address{Saint Peter's University, Jersey City NJ 07306}
\email{bhopkins@saintpeters.edu}
\author{James A. Sellers}
\address{University of Minnesota Duluth, Duluth MN 55812}
\email{jsellers@d.umn.edu}
\author{Dennis Stanton}
\address{University of Minnesota, Minneapolis MN 55455}
\email{stanton@math.umn.edu}

\subjclass[2010]{05A17, 11P81}
\keywords{integer partitions, crank, mex, Frobenius symbols, generating functions}

\begin{document}
\maketitle

\begin{abstract}
Andrews and Newman have recently introduced the notion of the mex of a partition, the smallest positive integer that is not a part.  The concept has been used since at least 2011, though, with connections to Frobenius symbols.  Recently the parity of the mex has been associated to the crank statistic named by Dyson in 1944.  In this note, we extend and strengthen the connection between the crank and mex (along with a new generaliztion of the mex) by proving a number of properties that naturally relate these partition statistics.
\end{abstract}

\section{Introduction}
Given a positive integer $n$, a partition of $n$ is a collection of positive integers $\lambda = (\lambda_1, \ldots, \lambda_j)$ with $\sum \lambda_i = n$.  The $\lambda_i$, called parts, are ordered so that $\lambda_1 \ge \cdots \ge \lambda_j$.  Write $p(n)$ for the number of partitions of $n$.

The crank is a partition statistic requested by Dyson \cite{d} in 1944 
to combinatorially prove a congruence of $p(n)$ values found by Ramanujan using analytic methods.  The interested reader can learn more about the crank, including its definition, from the 1988 work of Andrews and Garvan \cite{ag}.  The primary tool that we need in this paper is the generating function for the crank.  
Let $M(m,n)$ denote the number of partitions of $n$ with crank $m$.  
We use the $q$-series notation \[(a;q)_n = (1-aq)\cdots(1-aq^n) \text{ and } (a;q)_\infty = \lim_{n \rightarrow \infty} (a;q)_n.\]  When $a=q$, we write $(q)_n$ and $(q)_\infty$ instead.  The following is \cite[Theorem 7.19]{g}.
\begin{theorem}[Garvan]
For a given integer $m$, the generating function for the number of partitions of $n$ with crank $m$ is
\begin{equation} \label{cgf}
\sum_{n=0}^\infty M(m,n) q^n = \frac{1}{(q)_\infty} \sum_{n=1}^\infty (-1)^{n-1} q^{n(n-1)/2 + n|m|} (1-q^n). 
\end{equation}
\end{theorem}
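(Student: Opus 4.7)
The plan is to extract $[z^m]$ from the Andrews--Garvan two-variable crank generating function
\[
C(z,q) \;:=\; \sum_{m \in \mathbb{Z}} \sum_{n \ge 0} M(m,n)\, z^m q^n \;=\; \frac{(q)_\infty}{(zq)_\infty (q/z)_\infty}.
\]
Because the right-hand side is invariant under $z \leftrightarrow 1/z$, we have $M(m,n)=M(-m,n)$, so it suffices to prove (\ref{cgf}) for $m \ge 0$; this accounts for the $|m|$ in the target formula.

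The key ingredient is a classical theta / partial-fraction identity (a consequence of the Jacobi triple product, attributed to Kronecker and later appearing in Ramanujan's notebooks):
\[
\frac{(q)_\infty^2}{(z)_\infty (q/z)_\infty} \;=\; \sum_{n \in \mathbb{Z}} \frac{(-1)^n q^{n(n+1)/2}}{1 - zq^n},
\]
interpreted formally with $(1-zq^n)^{-1}$ expanded in nonnegative powers of $z$ when $n \ge 0$ and in negative powers of $z$ when $n < 0$. Since $(z)_\infty = (1-z)(zq)_\infty$, this rearranges to
\[
(q)_\infty\, C(z,q) \;=\; (1-z) \sum_{n \in \mathbb{Z}} \frac{(-1)^n q^{n(n+1)/2}}{1 - zq^n}.
\]

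Next I extract $[z^m]$ for $m \ge 0$. Only the $n \ge 0$ summands contribute to nonnegative powers of $z$; expanding each geometric series and using $[z^m](1-z)F(z) = [z^m]F(z) - [z^{m-1}]F(z)$ gives
\[
(q)_\infty \sum_{n\ge 0} M(m,n)\, q^n \;=\; \sum_{n\ge 0}(-1)^n q^{n(n+1)/2 + nm} \;-\; \sum_{n\ge 0}(-1)^n q^{n(n+1)/2 + n(m-1)}.
\]
Combining these via the elementary exponent identity $n(n+1)/2 + n(m-1) = n(n-1)/2 + nm$ collapses the right-hand side to $\sum_{n \ge 1}(-1)^{n-1} q^{n(n-1)/2 + nm}(1-q^n)$, with the $n=0$ term dropping out. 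Dividing by $(q)_\infty$ produces (\ref{cgf}).

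The main obstacle is the theta partial-fraction identity itself. I would establish it by checking that both sides are meromorphic in $z$ with matching simple poles at $z = q^{-n}$ and matching residues $(-1)^{n+1} q^{n(n-1)/2}$ for each integer $n$ (the product side's residues being computable directly from Jacobi's triple product), together with a quasi-periodicity or growth argument in $z$ to rule out an additive entire correction. Once that identity is in hand, all remaining steps are routine $z$-coefficient bookkeeping.
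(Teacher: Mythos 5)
The paper does not prove this theorem at all: it is quoted verbatim from Garvan \cite[Theorem~7.19]{g}, so there is no internal argument to compare against. Your derivation --- extract $[z^m]$ from the Andrews--Garvan product $C(z,q)=(q)_\infty/((zq)_\infty(q/z)_\infty)$ via the Kronecker/Ramanujan partial-fraction expansion $\frac{(q)_\infty^2}{(z)_\infty(q/z)_\infty}=\sum_{n\in\mathbb{Z}}\frac{(-1)^nq^{n(n+1)/2}}{1-zq^n}$ --- is in fact essentially Garvan's own route to this formula, and the coefficient bookkeeping you carry out is correct: for $m\ge 1$ the two sums combine, via $n(n+1)/2+n(m-1)=n(n-1)/2+nm$, into exactly $\sum_{n\ge1}(-1)^{n-1}q^{n(n-1)/2+nm}(1-q^n)$, and I verified the residues you assert at $z=q^{-n}$.

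Two caveats. First, essentially all of the weight of the proof rests on the partial-fraction identity, which you only sketch; the residue-matching plus ``no entire correction'' argument is standard but needs the convergence region $|q|<|z|<1$ and a boundedness/quasi-periodicity step spelled out before this counts as a complete proof rather than a reduction to another classical theorem. Second, your statement that ``only the $n\ge0$ summands contribute to nonnegative powers of $z$'' does not cover the case $m=0$, where $[z^{m-1}]F(z)=[z^{-1}]F(z)$ is needed: the $n\ge0$ geometric series contribute nothing to $[z^{-1}]$, and the correct value $\sum_{k\ge1}(-1)^{k+1}q^{k(k+1)/2}$ comes entirely from the $n<0$ summands. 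Your displayed intermediate formula nevertheless gives the right answer at $m=0$, because $\sum_{n\ge0}(-1)^nq^{n(n-1)/2}$ happens to equal that same series (the involution $n\mapsto 1-n$ shows the bilateral sum $\sum_{n\in\mathbb{Z}}(-1)^nq^{n(n-1)/2}$ vanishes), but as written the recipe and the formula do not match in that case; either treat $m=0$ separately or note this cancellation explicitly.
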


While introducing work of Karl Mahlburg, George Andrews and Ken Ono \cite{ao} discussed ``the central role of the crank in the theory of partitions.''  Our goal is to further elucidate Dyson's crank by way of a newer partition statistic, the mex. 

The minimal excludant of a set of integers, abbreviated mex, is the smallest positive integer not in the set.  Some examples of the mex applied to partitions follow. 
\[\mex(3,2) = 1, \quad \mex(3,1,1) = 2, \quad \mex(2,2,1) = 3.\]
Andrews first used ``the smallest part that is \textit{not} a summand'' in 2011 \cite{a}; he and Newman started using the term mex, from combinatorial game theory, in 2019 \cite{an19}.

Here is an outline of the remainder of the paper.  In Section \ref{s2}, we introduce a generalization of the mex statistic and connect it to the partitions with a given minimum crank.  This, along with a formula for the number of partitions with a given mex, leads to a formula for the number of partitions with a given crank.
Section \ref{s3} establishes connections between the crank and a classic partition representation, the Frobenius symbol.
Finally, in Section \ref{s4}, we refine the partitions with odd mex by considering the mex modulo 4; this unveils a surprising identity in which the function $q(n)$, the number of partitions of $n$ with distinct parts, arises.

\section{Crank and a generalized mex} \label{s2}
We introduce a generalization of the mex in order to find results on $M(m,n)$ for arbitrary $m$.  
We will focus on cases with $m \ge 0$.  Results for negative $m$ follow from the relation $M(m,n) = M(-m,n)$ \cite[(1.23)]{g}.

\begin{definition}
Given a partition $\lambda$ and a positive integer $j$ that is a part of $\lambda$, define $\mex_j(\lambda)$ to be the least integer greater than $j$ that is not a part of $\lambda$.  If $j$ is not among the parts of $\lambda$, then $\mex_j(\lambda)$ is not defined.
\end{definition}

If we consider 0 to be a part of every partition, then $\mex_j$ for $j=0$ reduces to the mex described above.

Our first result gives the number of partitions with a specified minimum crank in terms of $\mex_j$.  

\begin{theorem} \label{jcrank}
Given a nonnegative integer $j$, the number of partitions of $n$ with $\crank(\lambda) \ge j$ equals the number of partitions of $n$ such that $\mex_j(\lambda) - j$ is odd.
\end{theorem}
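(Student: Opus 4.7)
The plan is to prove Theorem \ref{jcrank} by showing both sides have generating function
\[ \frac{1}{(q)_\infty} \sum_{r \ge 1}(-1)^{r-1} q^{jr + r(r-1)/2}, \]
computing one side directly from the definition of $\mex_j$ and the other by summing Garvan's formula \eqref{cgf}.

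For the mex side, I would partition the count according to $r = \mex_j(\lambda) - j \in \{1,3,5,\ldots\}$. Having $\mex_j(\lambda) = j+r$ forces $j, j+1, \ldots, j+r-1$ to each appear at least once, $j+r$ to be absent, and all other part sizes to be free, so the corresponding generating function is
\[ \frac{(1-q^{j+r}) q^{jr + r(r-1)/2}}{(q)_\infty}. \]
Writing $f(r) = q^{jr + r(r-1)/2}$, the exponent check $q^{j+r}f(r) = q^{j(r+1) + (r+1)r/2} = f(r+1)$ rewrites each summand as $(f(r) - f(r+1))/(q)_\infty$, so summing over odd $r \geq 1$ telescopes into the alternating series above. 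For the boundary case $j = 0$ the paper's convention that $0$ is a part of every partition makes $\mex_0$ well-defined on all $\lambda$, and the same formula specializes correctly.

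For the crank side, I would sum \eqref{cgf} over $m \geq j$; since $j \geq 0$ and we only need $m \ge j$, we have $|m| = m$ throughout. Interchanging sums and evaluating the geometric series gives
\[ \frac{1}{(q)_\infty} \sum_{r \ge 1}(-1)^{r-1} q^{r(r-1)/2}(1-q^r) \cdot \frac{q^{jr}}{1-q^r}, \]
and the $(1-q^r)$ factor cancels to yield exactly the same expression. The main obstacle is spotting the telescoping identity $(1-q^{j+r})f(r) = f(r) - f(r+1)$ on the mex side; once that is in hand, both sides reduce to identical partial-theta-type sums and the rest is routine manipulation.
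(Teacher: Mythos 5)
Your proposal is correct and is essentially the paper's own proof: both sides are matched via generating functions, with the crank side obtained by summing Garvan's formula over $m \ge j$ and cancelling $(1-q^n)$, and the mex side obtained from the forced run of parts $j, j+1, \dots, j+r-1$ with $j+r$ absent. The only cosmetic difference is that you telescope the mex-side sum into the alternating series $\sum_{r\ge 1}(-1)^{r-1}q^{jr+r(r-1)/2}$, whereas the paper instead pairs consecutive terms of the crank-side alternating series to produce the factors $(1-q^{2k+j+1})$; these are the same manipulation applied to opposite sides of the identity.
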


The core of the proof is in modifying \eqref{cgf} to address partitions with a specified minimum crank.  We state this as a lemma since it leads to additional results.

\begin{lemma} \label{crankj+}
For a given integer $j$, the generating function for the number of partitions of $n$ with $\crank(\lambda) \ge j$ is
\[ \frac{1}{(q)_\infty} \sum_{k=0}^\infty q^{j(2k+1) + k(2k+1)} (1-q^{2k+j+1}). \]
\end{lemma}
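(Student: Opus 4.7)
The plan is to sum Garvan's formula \eqref{cgf} over all $m \geq j$ and massage the resulting alternating series into the pairwise form displayed in the lemma. Since the section restricts to $j \geq 0$, we have $|m| = m$ for all $m \geq j$, so
\[
\sum_{n=0}^\infty \Bigl(\sum_{m \geq j} M(m,n)\Bigr) q^n = \frac{1}{(q)_\infty} \sum_{m=j}^\infty \sum_{n=1}^\infty (-1)^{n-1} q^{n(n-1)/2 + nm}(1-q^n).
\]
The interchange of summations is legitimate because the double series converges absolutely as a formal power series in $q$ (for each power of $q$ only finitely many terms contribute).

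After swapping, the inner sum over $m$ is a geometric series $\sum_{m \geq j} q^{nm} = q^{nj}/(1-q^n)$, which exactly cancels the factor $(1-q^n)$. This collapses the expression to
\[
\frac{1}{(q)_\infty} \sum_{n=1}^\infty (-1)^{n-1} q^{n(n-1)/2 + nj}.
\]
I would then pair the terms with $n = 2k+1$ and $n = 2k+2$ for $k \geq 0$. The odd-indexed term contributes $+q^{k(2k+1) + j(2k+1)}$ and the even-indexed term contributes $-q^{(k+1)(2k+1) + j(2k+2)}$, so factoring $q^{k(2k+1) + j(2k+1)}$ out of the pair leaves the factor $1 - q^{2k+1+j}$, which is exactly the $k$th summand of the claimed generating function.

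The only real obstacle is keeping the two exponent arithmetic steps straight: verifying that $(2k+2)(2k+1)/2 - (2k+1)(2k)/2 = 2k+1$ and $j(2k+2) - j(2k+1) = j$ so that the leftover exponent in the $1 - q^{(\cdot)}$ factor is indeed $2k+j+1$. Once this routine bookkeeping is done, the identity follows and the lemma is established.
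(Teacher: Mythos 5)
Your proposal is correct and follows essentially the same route as the paper: sum \eqref{cgf} over $m\ge j$, let the geometric series in $m$ cancel the factor $(1-q^n)$, and then pair the odd- and even-indexed terms (the paper splits into $n=2k-1$ and $n=2k$ and reindexes, which is the same pairing). The exponent bookkeeping you outline checks out, so no further comment is needed.
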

\begin{proof}
Summing \eqref{cgf} over $m\geq j$ yields  
\begin{align}
\sum_{m= j}^\infty \sum_{n=0}^\infty M(m,n) q^n & =  \frac{1}{(q)_\infty} \sum_{n=1}^\infty (-1)^{n-1} q^{n(n-1)/2} (1-q^n) \sum_{m= j}^\infty q^{nm} \nonumber \\
& = \frac{1}{(q)_\infty} \sum_{n=1}^\infty (-1)^{n-1} q^{n(n-1)/2} (1-q^n) \left( \frac{q^{nj}}{1-q^n} \right) \nonumber \\
& = \frac{1}{(q)_\infty} \sum_{n=1}^\infty (-1)^{n-1} q^{n(n-1)/2}q^{nj} \label{later} \\
& = \frac{1}{(q)_\infty} \sum_{n=1}^\infty (-1)^{n-1} q^{n(n-1+2j)/2} \nonumber \\
& = \frac{1}{(q)_\infty} \left( \sum_{k= 1}^\infty q^{(2k-1)((2k-1)-1+2j)/2} - \sum_{k= 1}^\infty q^{2k(2k-1+2j)/2} \right) \nonumber \\
& = \frac{1}{(q)_\infty} \left( \sum_{k= 0}^\infty q^{(2k+1)(k+j)} - \sum_{k= 0}^\infty q^{(k+1)(2k+1+2j)} \right) \nonumber \\
& = \frac{1}{(q)_\infty} \sum_{k=0}^\infty q^{j(2k+1)+k(2k+1)} (1 - q^{2k+j+1}).  \nonumber \qedhere
\end{align}
\end{proof}

\begin{proof}[Proof of Theorem \ref{jcrank}.]
The partition condition $\mex_j - j$ odd means, for some nonnegative integer $k$, that $j, j+1, \dots, 2k+j$ are parts and $2k+j+1$ is not.  The generating function for such partitions of $n$ is 
\begin{align*}
 \sum_{k=0}^\infty \frac{q^{j+(j+1)+\cdots+(j+2k)}}{\prod_{i=1, \, i \ne 2k+j+1}^\infty (1-q^i)}   
 & = \frac{1}{(q)_\infty} \sum_{k=0}^\infty q^{j(2k+1) + (2k)(2k+1)/2} (1-q^{2k+j+1})  \\
 & = \frac{1}{(q)_\infty} \sum_{k=0}^\infty q^{j(2k+1) + k(2k+1)} (1-q^{2k+j+1}) 
 \end{align*}
which matches the generating function in Lemma \ref{crankj+} for the number of partitions of $n$ with $\crank(\lambda) \ge j$.
\end{proof}

Let $o(n)$ be the number of partitions of $n$ with odd mex and $e(n)$ the number with even mex.  The $j=0$ case of Theorem \ref{jcrank}, i.e., that $o(n)$ equals the number of partitions of $n$ with nonnegative crank, was proven independently by Andrews--Newman \cite[Thm. 2]{an20} and Hopkins--Sellers \cite[Thm. 1]{hs} in 2020.  For the $j=1$ case, the conditions of Theorem \ref{jcrank} are equivalent to having even mex, thus $e(n)$ equals the number of partitions of $n$ with positive crank \cite[Cor. 2]{hs}.

Next, we derive a formula for the number of partitions with a given crank in terms of $p(n)$.  We will then connect a special case of this with the mex.

\begin{corollary} \label{crankrecur}
Given nonnegative $j$, the number of partitions of $n$ with crank $j$ is
\begin{align*}
M(j,n) & = \sum_{k=1}^\infty (-1)^{k+1} \left(p\!\left( n - \frac{k(k+2j-1)}{2} \right) - p\!\left( n - \frac{k(k+2j+1)}{2}\right)\right) \\
	& = p(n-j) - p(n-j-1) - p(n-2j-1) + p(n-2j-3) + \cdots.
\end{align*}
\end{corollary}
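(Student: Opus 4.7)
The plan is to read off the formula directly from Garvan's generating function \eqref{cgf}, since $M(j,n)$ with $j\ge 0$ is exactly the case $m=j$ (so $|m|=j$) of that theorem. First I would rewrite the factor $q^{n(n-1)/2+nj}(1-q^n)$ as the difference
\[ q^{n(n+2j-1)/2}-q^{n(n+2j+1)/2}, \]
so that \eqref{cgf} becomes
\[ \sum_{n=0}^\infty M(j,n)\,q^n \;=\; \frac{1}{(q)_\infty}\sum_{k=1}^\infty (-1)^{k-1}\Bigl(q^{k(k+2j-1)/2}-q^{k(k+2j+1)/2}\Bigr), \]
after renaming the summation index from $n$ to $k$ to avoid conflict with the coefficient variable.

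Next I would use the standard identity $1/(q)_\infty=\sum_{m\ge 0}p(m)\,q^m$ and multiply through, so that extracting the coefficient of $q^n$ on both sides gives
\[ M(j,n)\;=\;\sum_{k=1}^\infty(-1)^{k-1}\Bigl(p\bigl(n-k(k+2j-1)/2\bigr)-p\bigl(n-k(k+2j+1)/2\bigr)\Bigr), \]
with the convention $p(m)=0$ for $m<0$ truncating the sum to finitely many nonzero terms. Since $(-1)^{k-1}=(-1)^{k+1}$, this is exactly the first displayed formula in the corollary.

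Finally, the second displayed form in the statement is simply the first few explicit terms: the $k=1$ summand contributes $p(n-j)-p(n-j-1)$, the $k=2$ summand contributes $-p(n-2j-1)+p(n-2j-3)$, and so on, which I would list to confirm the pattern.

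There is no real obstacle here; the argument is a routine coefficient extraction once \eqref{cgf} is in hand. The only point requiring care is the bookkeeping of exponents, namely verifying that $n(n-1)/2+nj=n(n+2j-1)/2$ and $n(n+1)/2+nj=n(n+2j+1)/2$, and noting that the convention $p(m)=0$ for $m<0$ makes the infinite sum a finite one for each fixed $n$. (Equivalently, one could reach the same formula by subtracting the generating functions of Lemma \ref{crankj+} for $j$ and $j+1$ and telescoping back through line \eqref{later}, but the direct route from \eqref{cgf} is shorter.)
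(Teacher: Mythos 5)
Your proof is correct, and every step checks out: the exponent bookkeeping $n(n-1)/2+nj=n(n+2j-1)/2$ and $n(n-1)/2+nj+n=n(n+2j+1)/2$ is right, and the $k=1,2$ terms do reproduce the second displayed line of the corollary. However, you take a different route from the paper's own proof. The paper does not expand \eqref{cgf} at $m=j$ directly; instead it starts from the intermediate line \eqref{later} in the proof of Lemma \ref{crankj+}, which gives the cumulative count of partitions with $\crank(\lambda)\ge j$ as $\sum_{k\ge1}(-1)^{k+1}p\bigl(n-\binom{k}{2}-kj\bigr)$, and then obtains $M(j,n)$ as the difference of this expression for $j$ and for $j+1$. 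The two routes are essentially dual: the paper differences two cumulative generating functions, while you expand the factor $(1-q^n)$ inside the single-$m$ generating function; the resulting pairing of terms is the same, and indeed the paper remarks immediately after the proof that ``one could also prove Corollary \ref{crankrecur} from \eqref{cgf},'' which is precisely your argument. Your version is self-contained and slightly shorter; the paper's version has the small advantage of exhibiting the summand $p\bigl(n-\binom{k}{2}-kj\bigr)-p\bigl(n-\binom{k}{2}-k(j+1)\bigr)$ as an explicit difference of counts of partitions with crank at least $j$ and at least $j+1$, which fits the narrative flow from Lemma \ref{crankj+} and Theorem \ref{jcrank}.
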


Note that such a formula involves finitely many terms since $p(n) = 0$ for $n<0$.

\begin{proof}
Using \eqref{later} and interpreting $1/(q)_\infty$ as $\sum p(n) q^n$, the number of partitions of $n$ with minimum crank $j$ is
\[\sum_{k=1}^\infty (-1)^{k+1} p\!\left( n - \binom{k}{2}-kj \right).\]
Subtracting from this the analogous expression for the number of partitions of $n$ with minimum crank $j+1$ gives
\[M(j,n) = \sum_{k=1}^\infty (-1)^{k+1} \left(p\!\left( n - \binom{k}{2}-kj \right) - p\!\left( n - \binom{k}{2}-k(j+1) \right)\right)\]
which is equivalent to the stated result. 
\end{proof}

One could also prove Corollary \ref{crankrecur} from \eqref{cgf}.
 
We want to highlight the $m=0$ case of Corollary \ref{crankrecur}, which is Corollary \ref{0crank}.  While it needs no additional proof, we provide a second verification in order to apply the following result which we will use again in the final section.

Let $x(m,n)$ be the number of partitions of $n$ with mex $m$.  Write $t_k = 1 + \cdots + k$ for the $k$th triangular number.
\begin{prop} Given positive integers $n$ and $m$, the number of partitions of $n$ with mex $m$ is
\[x(m,n) = p(n-t_{m-1}) - p(n-t_m).\]
\label{mexform}
\end{prop}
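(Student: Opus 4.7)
The plan is to partition the count of partitions with mex exactly $m$ as a difference of two counts whose generating functions or bijective descriptions are immediate. Recall that $\mex(\lambda) = m$ means precisely that each of $1, 2, \ldots, m-1$ appears as a part of $\lambda$ while $m$ does not. Let $A(m,n)$ denote the number of partitions of $n$ that contain every one of $1, 2, \ldots, m-1$ as a part (with no restriction on $m$ or larger parts), that is, the number of partitions of $n$ with $\mex(\lambda) \ge m$. Then by inclusion-exclusion,
\[
x(m,n) = A(m,n) - A(m+1,n),
\]
so the problem reduces to showing $A(m,n) = p(n - t_{m-1})$.

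The key step is a one-line bijection. Given a partition $\lambda$ of $n$ counted by $A(m,n)$, remove exactly one copy of each of the parts $1, 2, \ldots, m-1$; the result is an arbitrary partition of $n - t_{m-1}$, since the total removed is $1 + 2 + \cdots + (m-1) = t_{m-1}$ and no constraints remain on the remaining parts. The inverse appends a single copy of $1, 2, \ldots, m-1$ to an arbitrary partition of $n - t_{m-1}$. Hence $A(m,n) = p(n - t_{m-1})$ and $A(m+1,n) = p(n - t_m)$, giving
\[
x(m,n) = p(n - t_{m-1}) - p(n - t_m).
\]

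As a sanity check I would also derive the same formula via generating functions. The partitions counted by $x(m,n)$ decompose as (i) a choice of how often each of $1,\ldots,m-1$ appears (at least once), contributing $\prod_{i=1}^{m-1} \frac{q^i}{1 - q^i} = \frac{q^{t_{m-1}}}{(q;q)_{m-1}}$, and (ii) a choice of how often each part $\ge m+1$ appears, contributing $\prod_{i \ge m+1} \frac{1}{1 - q^i}$. Multiplying yields
\[
\sum_{n \ge 0} x(m,n) \, q^n \;=\; \frac{q^{t_{m-1}} (1 - q^m)}{(q)_\infty} \;=\; \frac{q^{t_{m-1}} - q^{t_m}}{(q)_\infty},
\]
and extracting coefficients using $1/(q)_\infty = \sum_n p(n) q^n$ reproduces the stated formula.

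No step looks like a real obstacle: the whole argument rests on recognizing that ``mex is exactly $m$'' naturally splits into ``contains $1, \ldots, m-1$'' minus ``contains $1, \ldots, m$,'' after which both counts collapse to $p$-values by the trivial bijection of stripping off a staircase.
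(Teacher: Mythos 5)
Your argument is correct and is essentially the paper's own proof: both count partitions containing $1,\ldots,m-1$ as $p(n-t_{m-1})$ by stripping the staircase, then subtract the count of those also containing $m$, which is $p(n-t_m)$. The generating-function verification is a harmless extra confirmation of the same decomposition.
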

\begin{proof} For a partition to have mex $m$, it must include $1, \ldots, m-1$ as parts and exclude $m$.  The number of partitions of $n$ with the required included parts is $p(n - t_{m-1})$, since removing one 1, one 2, \dots, one $m-1$ leaves a partition of $n-t_{m-1}$.  The number of partitions of $n$ with $m$ also included as a part is \[p(n-t_{m-1}-m) = p(n-t_m),\] so the number with $m$ excluded is $p(n - t_{m-1}) - p(n-t_m)$.
\end{proof}
One can find the ideas for this result in the second proof of \cite[Thm 1.1]{an19}.

We can use Proposition \ref{mexform} and the earlier results about odd and even mex to verify the formula for $M(0,n)$ in terms of $p(n)$.

\begin{corollary} \label{0crank}
The number of crank 0 partitions of $n$ is
\begin{align*}
M(0,n) & = p(n) + 2 \sum_{k=1}^\infty (-1)^k p\!\left(n-\frac{k(k+1)}{2}\right) \\
	& = p(n) - 2p(n-1) + 2p(n-3) - 2p(n-6) + \cdots.
\end{align*}
\end{corollary}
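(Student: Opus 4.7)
The plan is to exploit the identity $M(0,n) = o(n) - e(n)$, where $o(n)$ and $e(n)$ are the numbers of partitions of $n$ with odd and even mex respectively, and then apply Proposition~\ref{mexform} to each term.

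First I would observe that a partition of $n$ has crank exactly $0$ if and only if it has crank $\ge 0$ but not crank $\ge 1$. By the $j=0$ and $j=1$ cases of Theorem~\ref{jcrank} (recalled in the discussion after its proof), the number of partitions of $n$ with crank $\ge 0$ is $o(n)$ and the number with crank $\ge 1$ is $e(n)$. Hence
\[ M(0,n) = o(n) - e(n) = \sum_{m=1}^\infty (-1)^{m+1} x(m,n). \]

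Next I would substitute the formula $x(m,n) = p(n - t_{m-1}) - p(n-t_m)$ from Proposition~\ref{mexform} into this alternating sum. Writing the first several terms,
\[ M(0,n) = \bigl[p(n) - p(n-1)\bigr] - \bigl[p(n-1) - p(n-3)\bigr] + \bigl[p(n-3) - p(n-6)\bigr] - \cdots, \]
and grouping like terms, the internal $p(n-t_k)$ values appear twice with the same sign, giving
\[ M(0,n) = p(n) - 2p(n-1) + 2p(n-3) - 2p(n-6) + 2p(n-10) - \cdots, \]
which is the claimed identity with $t_k = k(k+1)/2$.

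There is no real obstacle here, since every piece is already in place: the mex-to-crank correspondence supplies the first equality, Proposition~\ref{mexform} supplies an exact formula for each $x(m,n)$, and the remainder is a clean telescoping of an alternating series. The only thing to be careful about is the bookkeeping of signs, specifically checking that the ``inner'' terms $p(n-t_k)$ for $k \ge 1$ receive the doubled coefficient $(-1)^k \cdot 2$ while the leading $p(n)$ term (arising from $t_0 = 0$ in the $m=1$ contribution only) is undoubled.
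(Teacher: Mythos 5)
Your proposal is correct and follows essentially the same route as the paper's own proof: both derive $M(0,n) = o(n) - e(n)$ from the $j=0$ and $j=1$ cases of Theorem~\ref{jcrank}, substitute the formula of Proposition~\ref{mexform} for each $x(m,n)$, and collect the resulting telescoping alternating sum into $p(n) + 2\sum_{k\ge 1}(-1)^k p(n - t_k)$. The sign bookkeeping you describe matches the paper's computation exactly.
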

\begin{proof}
By the $j=0$ case of Theorem \ref{jcrank}, there are $o(n)$ partitions of $n$ with nonnegative crank.  As discussed above, there are $e(n)$ partitions of $n$ with positive crank.  Therefore,
\begin{align*}
M(0,n) & = o(n) - e(n) \\
		&=  \sum_{k=0}^\infty x(2k+1,n) - \sum_{k=0}^\infty x(2k+2,n) \\
		& = \sum_{k=0}^\infty (p(n-t_{2k}) - p(n-t_{2k+1})) - \sum_{k=0}^\infty (p(n-t_{2k+1}) - p(n-t_{2k+2})) \\
		& = \sum_{k=0}^\infty (p(n-t_{2k}) - 2p(n-t_{2k+1}) + p(n-t_{2k+2})) \\
		& = p(n) + 2 \sum_{k=1}^\infty (-1)^k p\!\left(n-\frac{k(k+1)}{2}\right). \qedhere
\end{align*}
\end{proof}

This formula for $M(0,n)$ was recorded by Vaclav Kotesovec in 2016 \cite[A064410]{o}.

In 2019, Yuefei Shen showed that $o(n) \ge e(n)$ for $n > 1$ \cite[Thm. 1]{s}.  Hopkins--Sellers improved this to $o(n) > e(n)$ for $n>2$ \cite[Cor. 3]{hs}.  Corollary \ref{0crank} provides a precise formula for $o(n) - e(n)$.

\section{Frobenius symbol and the crank} \label{s3}
Frobenius developed a two-row notation for partitions \cite{f} now called the Frobenius symbol.  Each row consists of strictly decreasing nonnegative integers; see \cite[p. 78]{ae} for more information.

In 2011, Andrews \cite[Thm. 4]{a} proved that the number of partitions of $n$ whose Frobenius symbol has no 0 in the top row equals the number of partitions of $n$ with odd mex (in the current terminology).  Thus, by Andrews--Newman \cite[Thm. 2]{an20} and Hopkins--Sellers \cite[Thm. 1]{hs}, the number of partitions of $n$ whose Frobenius symbol has no 0 in the top row equals the number of partitions of $n$ with nonnegative crank.  In this section, we extend this result of Andrews in two ways.

First, we connect the crank 0 partitions with partitions whose Frobenius symbols have no 0 in either row. Let $[q^k]f(q)$ denote the coefficient of $q^k$ in the polynomial $f(q)$.

\begin{prop}  \label{noF0}
The number of partitions of $n$ with crank 0 equals the number of partitions of $n$ whose Frobenius symbol has no 0 minus the number of partitions of $n-1$ whose Frobenius symbol has no 0.
\end{prop}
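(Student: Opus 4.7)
The plan is to reduce the identity to a single bijection by way of inclusion--exclusion. Write $F(n)$ and $F'(n)$ for the number of partitions of $n$ whose Frobenius symbol has no $0$ in the top row and the bottom row, respectively; let $f(n)$ count those with no $0$ in either row and $b(n)$ count those with a $0$ in both. Since conjugation swaps the two rows of a Frobenius symbol, conjugation is an involution on partitions of $n$ that shows $F(n)=F'(n)$. The result of Andrews recalled just before the proposition identifies $F(n)$ with the number of partitions of $n$ having nonnegative crank, and Garvan's symmetry $M(m,n)=M(-m,n)$ then identifies the same quantity with the number having nonpositive crank.

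Applying inclusion--exclusion to the two events ``top row contains a $0$'' and ``bottom row contains a $0$'' yields
\[ f(n) = F(n) + F'(n) - p(n) + b(n). \]
Now $F(n)+F'(n)$ counts partitions of $n$ with nonnegative crank plus partitions with nonpositive crank, double-counting precisely the crank-$0$ partitions, and so equals $p(n)+M(0,n)$. Substituting gives $M(0,n)=f(n)-b(n)$, so the proposition reduces to showing $b(n)=f(n-1)$.

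For this last step I would exhibit an explicit bijection. The Frobenius condition $a_d=b_d=0$ in $\binom{a_1\,\cdots\,a_d}{b_1\,\cdots\,b_d}$ is equivalent to $\lambda_d=d$ and $\lambda'_d=d$ for the Durfee square of size $d$, meaning the corner cell at position $(d,d)$ has no neighbor to its right or below. Removing that cell yields a partition $\mu$ of $n-1$ with Durfee square $d-1$ and Frobenius symbol $\binom{a_1\,\cdots\,a_{d-1}}{b_1\,\cdots\,b_{d-1}}$; strict decrease forces $a_{d-1},b_{d-1}\ge 1$, so $\mu$ has no $0$ in either row. Conversely, a partition $\mu$ of $n-1$ whose Frobenius has no $0$ satisfies $\mu_{d-1}\ge d$ and $\mu_d=d-1$, so the cells $(d-1,d)$ and $(d,d-1)$ are present and a new cell can be legally adjoined at $(d,d)$ to recover the preimage.

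The main care point is verifying the bijection itself: one must check that the partition structure (weakly decreasing row lengths, equivalently a legal Young diagram) is preserved by both corner removal and corner insertion. This is exactly where the condition $\lambda'_d=d$ is essential, as it forces $\lambda_{d+1}\le d-1$ so that the row below the Durfee square is short enough to accommodate the removal, and dually the ``no $0$ in either row'' hypothesis on $\mu$ is precisely what guarantees the insertion lands in a valid Young diagram.
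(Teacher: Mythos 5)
Your proof is correct, and it takes a genuinely different route from the paper's. The paper works entirely with generating functions: it writes the no-zero Frobenius symbols as $\sum_s q^{s^2+2s}/(q)_s^2$, multiplies by $1-q$, and invokes the $a=b=0$ case of Heine's transformation to land on the Andrews--Newman expression $(q)_\infty\sum_k q^{2k}/(q)_k^2$ for the crank-$0$ generating function. You instead combine three combinatorial ingredients: (i) conjugation to get $F(n)=F'(n)$, together with Andrews's theorem and the symmetry $M(m,n)=M(-m,n)$ to evaluate $F(n)+F'(n)=p(n)+M(0,n)$; (ii) inclusion--exclusion to get $f(n)=F(n)+F'(n)-p(n)+b(n)$, hence $M(0,n)=f(n)-b(n)$; and (iii) the corner-cell bijection giving $b(n)=f(n-1)$. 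All three steps check out --- in particular, the condition $a_d=b_d=0$ does say exactly that $(d,d)$ is a removable corner, removal drops the Durfee square to $d-1$ and truncates the Frobenius symbol to $\binom{a_1\cdots a_{d-1}}{b_1\cdots b_{d-1}}$ with $a_{d-1},b_{d-1}\ge 1$ by strict decrease, and the inverse insertion is legal precisely under the no-zero hypothesis on $\mu$ (the degenerate case $d=1$, $\mu=\emptyset$ works with the usual convention that the empty Frobenius symbol has no $0$). What each approach buys: the paper's proof is a compact $q$-series computation but leaves the identity looking like an accident of Heine's transformation, whereas yours explains it structurally and, combined with (iii), essentially answers the paper's closing request for combinatorial tools --- at the cost of importing the nontrivial chain of results identifying $F(n)$ with the number of nonnegative-crank partitions, which the paper's proof does not need. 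One small caveat applies to both proofs equally: at $n=1$ the identity holds for the generating-function values $M(m,1)$ rather than for the literal combinatorial crank, the usual anomaly at $n=1$.
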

\begin{proof}
The generating function for Frobenius symbols with no occurrence of 0 is, letting $s$ be the number of parts,
\[ \sum_{s=0}^\infty q^s [t^s w^s] (-tq;q)_\infty (-wq;q)_\infty = \sum_{s=0}^\infty q^{s^2} \frac{q^{2s}}{(q)_s^2}.\]
Thus the number of partitions of $n$ whose Frobenius symbol has no 0 minus the number of partitions of $n-1$ of the same type is
\[\sum_{s=0}^\infty q^{s^2} \frac{q^{2s}}{(q)_s^2} - q\sum_{s=0}^\infty q^{s^2} \frac{q^{2s}}{(q)_s^2} 
= (1-q) \sum_{s=0}^\infty \frac{q^{s^2+2s}}{(q)_s^2}.\]
Now the $a=b=0$ case of Heine's transformation as given in \cite[Eq. III.3]{gr} with $c=q$ and $z=q^2$ gives
\[ \sum_{s=0}^\infty \frac{q^{s^2-s}(q^3)^s}{(q)_s^2} = (q^2;q)_\infty \sum_{k=0}^\infty \frac{q^{2k}}{(q)_k^2}.\]
Since $(q^2;q)_\infty = (q)_\infty/(1-q)$, we have
\[ (1-q) \sum_{s=0}^\infty \frac{q^{s^2+2s}}{(q)_s^2} = (q)_\infty \sum_{k=0}^\infty \frac{q^{2k}}{(q)_k^2}.\]
The right-hand side of this last equation is an alternative expression for the generating function for the number of crank 0 partitions of $n$ \cite[p. 5]{an20}.
\end{proof}

The sequence of the number of partitions of $n$ whose Frobenius symbol has no 0 is \cite[A188674]{o}.  By Proposition \ref{noF0}, the first differences of this sequence give the number of partitions with crank 0, \cite[A064410]{o}.

For our second result in this section, we connect partitions with minimal crank $j$ to certain partitions whose Frobenius symbols have no $j$ in the top row.  Andrews's \cite[Thm. 4]{a} is the $j=0$ case of the following theorem.  

\begin{theorem}
The number of partitions of $n$ with $\crank(\lambda) \ge j$ equals the number of partitions of $n - j$ whose Frobenius symbol has no $j$ in its top row.
\end{theorem}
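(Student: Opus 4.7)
The plan is to compute the generating function $F(q) = \sum_{m \ge 0} f_m q^m$, where $f_m$ is the number of partitions of $m$ whose Frobenius symbol has no $j$ in the top row, and show that $q^j F(q)$ coincides with the generating function for partitions of $n$ with $\crank(\lambda) \ge j$ established in Lemma \ref{crankj+}. Since the number of partitions of $n - j$ of the stated type is $[q^n]\, q^j F(q)$, this will prove the theorem.

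First I would write $F(q)$ by summing over the number $s$ of Frobenius columns. The top row is a strictly decreasing sequence of $s$ nonnegative integers, none equal to $j$, so its contribution is $[z^s] A(z)$ where
\[ A(z) = \prod_{i \ge 0,\ i \ne j}(1 + z q^i) = \frac{(-z;q)_\infty}{1 + z q^j}. \]
The bottom row contributes $[w^s] (-w;q)_\infty$, and the $s$ diagonal cells contribute $q^s$, giving
\[ F(q) = \sum_{s \ge 0} q^s \cdot [z^s] A(z) \cdot [w^s] (-w;q)_\infty. \]
I would then collapse this diagonal sum using the identity $\sum_s q^s a_s b_s = [u^0] A(u) B(q/u)$ (immediate from $A(u) B(q/u) = \sum_{s,t} a_s b_t q^t u^{s-t}$), yielding
\[ F(q) = [u^0]\, \frac{(-u;q)_\infty (-q/u;q)_\infty}{1 + u q^j}. \]

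Next, Jacobi's triple product $(-u;q)_\infty (-q/u;q)_\infty (q;q)_\infty = \sum_{n=-\infty}^{\infty} u^n q^{n(n-1)/2}$ converts the double product into a theta-like sum. Expanding $1/(1 + uq^j) = \sum_{k \ge 0}(-u)^k q^{jk}$ and extracting the constant term in $u$ (matching $n = -k$, with $(-k)(-k-1)/2 = k(k+1)/2$) gives
\[ F(q) = \frac{1}{(q)_\infty} \sum_{k \ge 0}(-1)^k q^{jk + k(k+1)/2}. \]
Reindexing $n = k + 1$ yields $q^j F(q) = \frac{1}{(q)_\infty} \sum_{n \ge 1}(-1)^{n-1} q^{jn + n(n-1)/2}$, which is precisely equation \eqref{later} from the proof of Lemma \ref{crankj+}, i.e., the generating function for partitions of $n$ with $\crank(\lambda) \ge j$.

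The step I expect to demand the most care is the diagonal-extraction identity that rewrites $F(q)$ as a single constant-term expression in $u$: this is a formal Laurent series manipulation (the factor $(-q/u;q)_\infty$ is understood as a power series in $1/u$), and one must verify that the intermediate series are well defined and the exchange of summations is legal. After that, spotting Jacobi's triple product inside the constant-term expression is the natural move, and the rest is routine reindexing.
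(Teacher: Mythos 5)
Your proposal is correct, and its overall architecture is the same as the paper's: compute the generating function for Frobenius symbols with no $j$ in the top row, obtain the closed form $\frac{1}{(q)_\infty}\sum_{k\ge 0}(-1)^k q^{jk+\binom{k+1}{2}}$, and observe that multiplying by $q^j$ gives exactly \eqref{later}. Where you diverge is in the evaluation of the diagonal sum $\sum_s q^s\,[t^sw^s]\frac{(-t;q)_\infty(-w;q)_\infty}{1+tq^j}$. The paper expands $[t^s]\frac{(-t;q)_\infty}{1+tq^j}$ as a finite convolution $\sum_{b=0}^{s}(-1)^b q^{jb}q^{\binom{s-b}{2}}/(q)_{s-b}$, interchanges the order of summation, and collapses the inner sum with the Durfee-rectangle identity $\sum_{s}\frac{q^{s^2+bs}}{(q)_s(q)_{s+b}}=\frac{1}{(q)_\infty}$ from Andrews. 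You instead package the diagonal as a constant term $[u^0]\frac{(-u;q)_\infty(-q/u;q)_\infty}{1+uq^j}$ and invoke the Jacobi triple product together with the geometric expansion of $\frac{1}{1+uq^j}$; matching $n=-k$ gives the same sum. Both routes are legitimate. The paper's stays entirely within nonnegative-power formal series and rests on a combinatorially transparent identity (Durfee rectangles), which fits the partition-theoretic flavor of the section; yours is shorter once the triple product is granted, but, as you note, it requires checking that the formal Laurent-series manipulations (the product of a doubly infinite theta series with $\sum_k(-1)^ku^kq^{jk}$ and the extraction of $[u^0]$) are $q$-adically convergent — which they are, since the relevant $q$-exponents $jk+\binom{k+1}{2}$ tend to infinity. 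One small point worth stating explicitly in your write-up: the constant-term evaluation of $[u^0]$ against the theta series is where the Durfee-square information is secretly encoded, so the two arguments are cousins rather than strangers.
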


\begin{proof}
The generating function for Frobenius symbols with no $j$ in the top row is, 
letting $s$ be the number of parts,
\begin{align*}
\sum_{s=0}^\infty q^s [t^s w^s] \frac{(-t;q)_\infty (-w;q)_\infty}{1+tq^j}
& =\sum_{s=0}^\infty q^s \frac{q^{\binom{s}{2}}}{(q)_s} \left(\sum_{b=0}^s (-1)^b q^{jb} \frac{q^{\binom{s-b}{2}}}{(q)_{s-b}}\right) \\
& =\sum_{s,b \ge 0} q^{s+b} \frac{q^{\binom{s+b}{2}}}{(q)_{s+b}} (-1)^b q^{jb} \frac{q^{\binom{s}{2}}}{(q)_s} \\
& = \sum_{b=0}^\infty q^{b+\binom{b}{2}} (-1)^b {q^{jb}} \left(\sum_{s=0}^\infty \frac{q^{s^2+bs}}{(q)_s (q)_{s+b}}\right) \\
& = \frac{1}{(q)_\infty} \sum_{b=0}^\infty (-1)^b q^{\binom{b+1}{2}}  {q^{jb}}
\end{align*}
where the last line follows from \cite[(4.1)]{a} based on the decomposition of all partitions by Durfee rectangles of size $s \times (s+b)$ \cite[p. 92]{gh}.
Next, by \eqref{later}, we know that the generating function for partitions with minimum crank $j$ is 
\begin{align*}
\frac{1}{(q)_\infty} \sum_{n=1}^\infty (-1)^{n-1} q^{n(n-1)/2}q^{nj}
& = \frac{1}{(q)_\infty} \sum_{n=0}^\infty (-1)^{n} q^{(n+1)n/2}q^{(n+1)j} \\
& = \frac{1}{(q)_\infty} \sum_{n=0}^\infty (-1)^{n} q^{\binom{n+1}{2}}q^{nj+j} \\
& = q^j \frac{1}{(q)_\infty} \sum_{n=0}^\infty (-1)^{n} q^{\binom{n+1}{2}}q^{nj}.
\end{align*}  
Comparing the corresponding coefficients in the two expressions gives the desired result.  
\end{proof}

\section{Splitting the odd mexes} \label{s4}
Recall that $o(n)$ is the number of partitions of $n$ with odd mex.  Write $o_1(n)$ for the number of partitions of $n$ with mex congruent to $1 \bmod 4$, similarly $o_3(n)$ for $3 \bmod 4$, so that $o(n) = o_1(n) + o_3(n)$.

Let $q(n)$ denote the number of partitions of $n$ with distinct parts.
We prove an interesting relationship between $o_1(n)$, $o_3(n)$, and $q(n)$ which depends on the parity of $n$.  

\begin{prop} \label{o13}
For $n \ge1$,
\[o_1(n) = \begin{cases} o_3(n) & \text{if $n$ is odd,} \\ o_3(n) + q(n/2) & \text{if $n$ is even.} \end{cases} \]
\label{comb}
\end{prop}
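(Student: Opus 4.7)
The plan is to compute the generating function of $o_1(n) - o_3(n)$ directly from Proposition~\ref{mexform}, identify it with $(-q^2;q^2)_\infty$ via the Jacobi triple product, and recognize the latter as $\sum_{k \ge 0} q(k)\, q^{2k}$.

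By Proposition~\ref{mexform},
\begin{align*}
o_1(n) &= \sum_{k \ge 0} \bigl( p(n - t_{4k}) - p(n - t_{4k+1}) \bigr), \\
o_3(n) &= \sum_{k \ge 0} \bigl( p(n - t_{4k+2}) - p(n - t_{4k+3}) \bigr),
\end{align*}
so in $o_1(n) - o_3(n)$ the coefficient of $p(n - t_m)$ follows the sign pattern $+,-,-,+,+,-,-,+,\ldots$ for $m = 0,1,2,\ldots$; that is, the sign is $(-1)^{\lfloor (m+1)/2 \rfloor}$. Multiplying by $1/(q)_\infty$ yields
\[
\sum_{n \ge 0} (o_1(n) - o_3(n))\, q^n = \frac{1}{(q)_\infty} \sum_{m = 0}^\infty (-1)^{\lfloor (m+1)/2 \rfloor} q^{t_m}.
\]

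Next, I would split the inner sum by the parity of $m$: writing $m = 2j$ gives sign $(-1)^j$ and exponent $j(2j+1)$, while $m = 2j+1$ gives sign $(-1)^{j+1}$ and exponent $(2j+1)(j+1)$. The substitution $j \mapsto -j-1$ in the second (negative) piece rewrites it as $\sum_{j \le -1} (-1)^j q^{j(2j+1)}$, so the two one-sided sums fuse into the bilateral theta series $\sum_{j = -\infty}^\infty (-1)^j q^{2j^2 + j}$. Applying the Jacobi triple product with base $q^4$ and variable $q^3$ evaluates this as $(q;q^4)_\infty (q^3;q^4)_\infty (q^4;q^4)_\infty = (q;q^2)_\infty (q^4;q^4)_\infty$.

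Finally, dividing by $(q)_\infty = (q;q^2)_\infty (q^2;q^2)_\infty$ and using $(-q^2;q^2)_\infty = (q^4;q^4)_\infty / (q^2;q^2)_\infty$ collapses the generating function to $(-q^2;q^2)_\infty = \sum_{k \ge 0} q(k)\, q^{2k}$. Since this series is supported on even powers of $q$, comparing coefficients gives $o_1(n) - o_3(n) = q(n/2)$ for even $n$ and $0$ for odd $n$, as claimed. The main obstacle I anticipate is spotting the bilateralization trick that turns the sum over $m \ge 0$ into a Jacobi theta series with a clean product form; once that is in place, the remaining infinite product manipulations are standard $q$-series bookkeeping.
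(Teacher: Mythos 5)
Your argument is correct: every step checks out, including the sign pattern $(-1)^{\lfloor(m+1)/2\rfloor}=(-1)^{t_m}$, the bilateralization $j\mapsto -j-1$ turning the two one-sided sums into $\sum_{j=-\infty}^{\infty}(-1)^jq^{2j^2+j}$, the triple-product evaluation $(q;q^4)_\infty(q^3;q^4)_\infty(q^4;q^4)_\infty=(q;q^2)_\infty(q^4;q^4)_\infty$, and the final cancellation down to $(-q^2;q^2)_\infty=\sum_{k\ge 0}q(k)q^{2k}$. The first half of your proof — using Proposition \ref{mexform} to write $o_1(n)-o_3(n)=\sum_{m\ge 0}(-1)^{t_m}p(n-t_m)$ — is exactly what the paper does. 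Where you diverge is in how that alternating sum is evaluated: the paper simply cites Ewell's 1973 recurrences \eqref{eweven} and \eqref{ewodd}, which assert precisely that this sum equals $q(n/2)$ for even $n$ and $0$ for odd $n$, whereas you prove the equivalent generating-function identity from scratch. In effect you have re-derived Ewell's theorem (really Gauss's identity $\sum_{m\ge0}(-1)^{t_m}q^{t_m}=(q;q^2)_\infty(q^4;q^4)_\infty$ followed by division by $(q)_\infty$) via the Jacobi triple product. The paper's route is shorter on the page but leans on an external reference; yours is self-contained modulo the triple product and makes the analytic mechanism visible, at the cost of the bilateralization trick and some infinite-product bookkeeping. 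Either is a legitimate proof of the proposition.
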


To prove Proposition \ref{o13}, we use the following 1973 result of John Ewell \cite[Thm. 2]{e}.  Recall that $t_k$ denotes the $k$th triangular number $1+\cdots+k$.  

\begin{theorem}[Ewell]
For each nonnegative integer $k$,
\begin{gather} 
\sum_{j=0}^\infty  (-1)^{t_j} p(2k-t_j)  = q(k), \label{eweven} \\
\sum_{j=0}^\infty  (-1)^{t_j} p(2k+1-t_j)  = 0. \label{ewodd}
\end{gather}
\end{theorem}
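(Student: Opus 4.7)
The plan is to unify the two identities \eqref{eweven} and \eqref{ewodd} into a single generating function statement and then reduce it to an application of Jacobi's triple product. Since $1/(q)_\infty = \sum_{n\ge 0} p(n) q^n$, and since $(-q^2;q^2)_\infty = \sum_{k\ge 0} q(k) q^{2k}$ (a partition of $2k$ into distinct even parts $2\lambda_i$ corresponds to a partition of $k$ into distinct parts $\lambda_i$), the pair of identities is equivalent to
\[ \frac{1}{(q)_\infty} \sum_{j=0}^\infty (-1)^{t_j} q^{t_j} = (-q^2;q^2)_\infty, \]
with \eqref{eweven} being the coefficient of $q^{2k}$ and \eqref{ewodd} the coefficient of $q^{2k+1}$. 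Clearing the denominator, the target reduces to the theta-type identity
\[ \sum_{j=0}^\infty (-1)^{t_j} q^{t_j} = (q;q)_\infty (-q^2;q^2)_\infty = (q;q^2)_\infty (q^4;q^4)_\infty, \]
where the final reformulation uses $(q;q)_\infty = (q;q^2)_\infty(q^2;q^2)_\infty$ together with $(-q^2;q^2)_\infty = (q^4;q^4)_\infty/(q^2;q^2)_\infty$.

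The bulk of the work is to massage the left-hand side into a bilateral theta series. Splitting by the parity of $j$ and using $(-1)^{t_{2m}} = (-1)^m$ together with $(-1)^{t_{2m+1}} = (-1)^{m+1}$, I would collect
\[ \sum_{j=0}^\infty (-1)^{t_j} q^{t_j} = \sum_{m=0}^\infty (-1)^m q^{m(2m+1)}\bigl( 1 - q^{2m+1} \bigr). \]
I would then split this into its two contributions and apply the substitution $m \mapsto -m-1$ to the $-q^{2m+1}$ piece. This transforms the exponent $(2m+1)(m+1) = 2m^2+3m+1$ into $2n^2+n$ (with new index $n \le -1$) and flips the sign $(-1)^m$, which is exactly what cancels the prefixed minus. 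The two halves thus merge into the clean bilateral sum
\[ \sum_{m=-\infty}^{\infty} (-1)^m q^{2m^2+m}. \]

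The last step is Jacobi's triple product
\[ \sum_{n \in \mathbb{Z}} (-z)^n Q^{n(n-1)/2} = (z;Q)_\infty (Q/z;Q)_\infty (Q;Q)_\infty, \]
applied with $Q = q^4$ and $z = q^3$, the choice being forced by the requirement $(-z)^n Q^{n(n-1)/2} = (-1)^n q^{2n^2+n}$. This gives
\[ \sum_{m \in \mathbb{Z}} (-1)^m q^{2m^2+m} = (q^3;q^4)_\infty (q;q^4)_\infty (q^4;q^4)_\infty = (q;q^2)_\infty (q^4;q^4)_\infty, \]
where the final simplification uses that the odd residues modulo $4$ are exactly $1$ and $3$. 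This matches the target, and extracting coefficients of $q^{2k}$ and $q^{2k+1}$ on both sides of the resulting generating function identity yields \eqref{eweven} and \eqref{ewodd} respectively. The main obstacle is the bookkeeping in the re-indexing step: the sign flip from $m \mapsto -m-1$ must line up exactly with the minus sign carried by $-q^{2m+1}$ so that the two unilateral pieces splice into a bilateral theta in standard JTP form; once that form is reached, the rest is essentially just exponent matching.
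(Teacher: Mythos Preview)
The paper does not prove this statement; it quotes it as Ewell's 1973 result \cite[Thm.~2]{e} and uses it as a black box in the proof of Proposition~\ref{o13}. Your argument via the generating-function reformulation and Jacobi's triple product is correct: the parity split of $j$, the re-indexing $m\mapsto -m-1$ that splices the two unilateral sums into $\sum_{m\in\mathbb{Z}}(-1)^m q^{2m^2+m}$, and the specialization $Q=q^4$, $z=q^3$ all check out, as does the final product simplification $(q;q^4)_\infty(q^3;q^4)_\infty=(q;q^2)_\infty$.
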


\begin{proof}[Proof of Proposition \ref{o13}]
From Proposition \ref{mexform},
\begin{align*}
o_1(n) & = \sum_{i=0}^\infty x(n,4i+1) =  \sum_{i=0}^\infty  (p(n-t_{4i}) - p(n-t_{4i+1})), \\
o_3(n) & = \sum_{i=0}^\infty x(n,4i+3) =  \sum_{i=0}^\infty  (p(n-t_{4i+2}) - p(n-t_{4i+3})). 
\end{align*}

Consider the two cases of the theorem separately.  First, suppose $n = 2k$.  Write \eqref{eweven} as
$$\sum_{i=0}^\infty  (p(2k-t_{4i}) - p(2k-t_{4i+1}) - p(2k-t_{4i+2}) + p(2k-t_{4i+3})) = q(k).$$
Then we have
$$\sum_{i=0}^\infty  (p(2k-t_{4i}) - p(2k-t_{4i+1})) =  q(k) + \sum_{i=0}^\infty  (p(2k-t_{4i+2}) - p(2k-t_{4i+3}))$$
which, by Proposition \ref{mexform} and the comment above, can be written  
$$o_1(2k) = q(k) + o_3(2k).$$

Second, suppose $n = 2k+1$.  Write \eqref{ewodd} as
$$\sum_{i=0}^\infty  (p(2k+1-t_{4i}) - p(2k+1-t_{4i+1}) - p(2k+1-t_{4i+2}) + p(2k+1-t_{4i+3})) = 0.$$
Then we have 
$$\sum_{i=0}^\infty  (p(2k+1-t_{4i}) - p(2k+1-t_{4i+1})) =  \sum_{i=0}^\infty  (p(2k+1-t_{4i+2}) - p(2k+1-t_{4i+3}))$$
which, by Proposition \ref{mexform} and the comment above, can be written  
\[  o_1(2k+1) = o_3(2k+1).     \qedhere \]
\end{proof}

The surprising relationship between $o_1(n)$ and $o_3(n)$ allows us to give a short proof of \cite[Thm. 1.2]{an19}.  (Unfortunately, the terms even and odd were reversed in that original theorem statement.)  

\begin{theorem}[Andrews--Newman]
$o(n)$ is almost always even and is odd exactly when $n=j(3j\pm1)$ for some $j$.
\end{theorem}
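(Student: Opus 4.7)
The plan is to combine the just-established Proposition \ref{o13} with a standard parity argument for the distinct-parts counting function $q$. Specifically, Proposition \ref{o13} gives
\[ o(n) = o_1(n) + o_3(n) = \begin{cases} 2\, o_3(n) & \text{if $n$ is odd,} \\ 2\, o_3(n) + q(n/2) & \text{if $n$ is even,} \end{cases} \]
so $o(n)$ is automatically even when $n$ is odd, and for even $n = 2k$ we have $o(n) \equiv q(k) \pmod 2$. Thus the theorem reduces to characterizing the $k$ for which $q(k)$ is odd.

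For that characterization, I would work with the generating function $\sum_{k \ge 0} q(k) x^k = (-x;x)_\infty$. Since $1+x^i \equiv 1-x^i \pmod 2$, we get
\[ \sum_{k \ge 0} q(k) x^k \equiv \prod_{i \ge 1}(1-x^i) \pmod 2, \]
and Euler's pentagonal number theorem rewrites the right-hand side as $\sum_{j \in \mathbb{Z}} (-1)^j x^{j(3j-1)/2}$. Reading off coefficients modulo 2 shows that $q(k)$ is odd precisely when $k = j(3j-1)/2$ for some integer $j$, equivalently when $2k = j(3j-1)$ or $2k = j(3j+1)$ for some positive integer $j$ (taking $j > 0$ and $j < 0$ separately).

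Combining this with the reduction above, $o(n)$ is odd if and only if $n$ is even and $n/2$ has the pentagonal form, i.e., $n = j(3j \pm 1)$ for some positive integer $j$. Since these generalized pentagonal numbers have density zero in $\mathbb{N}$, $o(n)$ is even for almost all $n$, completing the theorem.

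I do not anticipate a serious obstacle: Proposition \ref{o13} does the real work of reducing the parity of $o(n)$ to the parity of $q(n/2)$, and the pentagonal-number-theorem parity argument is classical. The only point requiring a bit of care is bookkeeping the two signs in $j(3j\pm 1)$, which comes from splitting the sum in Euler's identity over positive and negative indices $j$.
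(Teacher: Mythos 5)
Your proposal is correct and follows exactly the paper's route: Proposition \ref{o13} reduces the parity of $o(n)$ to that of $q(n/2)$ for even $n$ (and forces evenness for odd $n$), and then Euler's pentagonal number theorem settles the parity of $q$. The only difference is that you spell out the classical mod-2 argument $\prod(1+x^i)\equiv\prod(1-x^i)\pmod 2$, which the paper leaves as ``well-known.''
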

\begin{proof}
By Proposition \ref{o13}, 
\begin{align*}
o(n) = o_1(n) + o_3(n) 
&= \begin{cases} 2o_3(n) & \text{if $n$ is odd,} \\ 2o_3(n) + q(n/2) & \text{if $n$ is even,} \end{cases} \\
&\equiv  \begin{cases} 0 \bmod 2 & \text{if $n$ is odd,} \\  q(n/2) \bmod 2 & \text{if $n$ is even.} \end{cases} 
\end{align*}
That is, the parity of $o(n)$ for even $n$ reduces to the parity of $q(n/2)$.  The result follows from 
Euler's well-known pentagonal number theorem.
\end{proof}

We conclude with some ideas for further exploration.  Proposition \ref{o13} suggests a bijective proof perhaps along the lines of Franklin's combinatorial proof of Euler's pentagonal number theorem \cite[p. 25]{ae}.  Also, which partitions of $n$ with nonnegative crank correspond to $o_1(n)$ rather than $o_3(n)$?  Similarly, which partitions of $n$ with Frobenius symbols having no 0 in the top row correspond to $o_1(n)$?  Regarding Section \ref{s2}, while there are combinatorial understandings of the crank \cite{bg}, what tools allow for combinatorial proofs involving the mex and $\mex_j$?


\begin{thebibliography}{9}

\bibitem{a}
Andrews, G. E. (2011).  Concave compositions.  \textit{Electron. J. Combin.} 18 P6.

\bibitem{ae}
Andrews, G. E., Eriksson, K. (2004).  Integer Partitions.  Cambridge Univ. Press, Cambridge.

\bibitem{ag}
Andrews, G. E., Garvan, F. G. (1988).  Dyson's crank of a partition.  \textit{Bull. Amer. Math. Soc.} 18 167--171.

\bibitem{an19}
Andrews, G. E., Newman, D. (2019).  Partitions and the minimal excludant. \textit{Ann. Comb.} 23 249--254.

\bibitem{an20}
Andrews, G. E., Newman, D. (2020).  The minimal excludant in integer partitions. \textit{J. Integer Seq.} 23 20.2.3.

\bibitem{ao}
Andrews, G. E., Ono, K. (2005).  Ramanujan's congruences and Dyson's crank.  \textit{Proc. Natl. Acad. Sci. USA} 102 15277.

\bibitem{bg}
Berkovich, A., Garvan, F. G. (2002).  Some observations on Dyson's new symmetries of partitions. \textit{J. Combin. Theory Ser. A} 100 61--93.

\bibitem{d}
Dyson, F. (1944). Some guesses in the theory of partitions. \textit{Eureka} 8 10-15.  

\bibitem{e}
Ewell, J. A. (1973).  Partition recurrences.  \textit{J. Combin. Theory Ser. A} 14 125--127.

\bibitem{f}
Frobenius, F. G. (1900).  \"Uber die Charaktere der symmetrischen Gruppe.  \textit{Sitzungsber. K. Preuss. Akad. Wisse. Berlin} 1900 516--534.

\bibitem{g}
Garvan, F. G. (1988).  New combinatorial interpretations of Ramanujan's partition congruences mod 5, 7, 11. \textit{Trans. Amer. Math. Soc.} 305 47--77.

\bibitem{gr}
Gasper, G., Rahman, M. (2004). Basic Hypergeometric Series, second ed.  Cambridge Univ. Press, Cambridge.

\bibitem{gh}
Gordon, B., Houten, L. (1968). Notes on plane partitions, II. \textit{J. Combin. Theory} 4 81--99. 

\bibitem{hs}
Hopkins, B., Sellers, J. A. (2020).  Turning the partition crank.  \textit{Amer. Math. Monthly} 127 654--657.

\bibitem{s}
Shen, Y. (2019).  On partitions classified by smallest missing part.  \textit{Ramanujan J.} 49: 411--419.

\bibitem{o}
Sloane, N. J. A., ed., (2020). The Online Encyclopedia of Integer Sequences. \url{oeis.org}.


\end{thebibliography}
\end{document}